 \newtheorem{thm}{Theorem}[section]
 \newtheorem{cor}[thm]{Corollary}
\newtheorem{lem}[thm]{Lemma}
 \newtheorem{prop}[thm]{Proposition}
 \newtheorem{defn}[thm]{Definition}
 \newtheorem{rem}[thm]{Remark}
\newcommand{\eps}{\varepsilon}
\newcommand \be     {\begin{equation}}
\newcommand \ee     {\end{equation}}
\newcommand {\RR} {\mathbb{R}}
\newcommand {\Rplus} {\mathbb{R}_+}
\newcommand {\Rplusc} {\overline{\mathbb{R}_+}}
\newcommand \Rn    {\mathbb{R}^n}
\newcommand{\set}[1]{\left\{#1\right\}}
\newcommand \Gcal   {\mathcal G}
  \newcommand{\suml}{\sum\limits}
  \newcommand{\pa}{\partial}
 \numberwithin{equation}{section}
\begin{document}

\title [FLUXES IN BALANCE LAWS]
{REGULARITY OF FLUXES IN NONLINEAR HYPERBOLIC BALANCE LAWS }

\author{Matania Ben-Artzi}
\address{Matania Ben-Artzi: Institute of Mathematics, The Hebrew University, Jerusalem 91904, Israel}
\email{mbartzi@math.huji.ac.il}
\author{Jiequan Li}
\address{Jiequan Li: Laboratory of Computational Physics,  Institute of Applied Physics and Computational Mathematics,
 Beijing , China; Center for Applied Physics and Technology, Peking University, China;  and State Key Laboratory for Turbulence Research and Complex System, Peking University, China }
\email{li\_jiequan@iapcm.ac.cn}





\thanks{ The first author thanks the Institute of Applied Physics and Computational Mathematics, Beijing,  for the hospitality and support. The second author is supported by NSFC (nos. 11771054, 91852207), the Sino-German Research Group Project, National key project(GJXM92579) and Foundation of LCP. It is a pleasure to thank C. Dafermos and M. Slemrod for many useful comments. }


\keywords{balance laws, hyperbolic conservation laws, multi-dimensional,  discontinuous solutions,  finite volume schemes,  flux, trace on boundary}

\subjclass[2010]{Primary 35L65; Secondary 76M12, 65M08}

\date{\today}




\begin{abstract} This paper addresses the issue of the formulation of weak solutions to systems of nonlinear hyperbolic conservation laws as integral balance laws. The basic idea is that the ``meaningful objects'' are the \textit{fluxes}, evaluated across domain boundaries over time intervals. The fundamental result in this treatment is the regularity of the flux trace in the multi-dimensional setting. It implies that a weak solution indeed satisfies the balance law. In fact, it is shown that the flux is Lipschitz continuous with respect to suitable perturbations of the boundary.

\end{abstract}
\maketitle

\section{\textbf{INTRODUCTION}}

 This paper deals with the formulation of weak solutions of nonlinear hyperbolic conservation laws as solutions of integral ``balance laws''.  Such laws are closely associated with the relevant physical  laws. The basic idea is that the ``meaningful objects'' are the \textit{fluxes}, evaluated across manifolds over time intervals. In contrast, the role played by the unknown $u(x,t)$ is not its pointwise values, but is limited to its integral over a given domain as function of time. A fundamental issue is therefore the meaning (and regularity) of fluxes across domain boundaries.

  From the numerical point of view, finite volume schemes rely on an appropriate approximation of these fluxes, so the present paper is a contribution to the validity of the finite volume approach.

 The case of a single space dimension has already been studied by the authors in ~\cite{mathcomp}, and the emphasis here is on systems of conservation laws in the multi-dimensional case.

         Consider a system of hyperbolic conservation laws in $\Rn$ of the form
 \be\label{eqconslaw} u(x,t)_t+\nabla\cdot f(u(x,t))=0,\quad u=(u_1,\ldots,u_D)\in\RR^D,\quad (x,t)\in\Rn\times\RR_+,
 \ee
 where the matrix of fluxes is $$f(u)=(f_1(u),\ldots,f_D(u)),\,\,\,f_i(u)\in\Rn,\,\,i=1,2,\ldots,D,$$
     subject to initial data

    \be\label{eqinitdata}
    u(x,0)=u_0(x),\quad x\in\Rn.
    \ee

   Let $\Omega\subseteq\Rn$ be a bounded smooth domain with $\Gamma=\pa\Omega,$ and let $0\leq t_1<t_2.$
 Formally, by integration of the equation in $Q=\Omega\times[t_1,t_2]\subseteq \Rn\times\overline{\RR_+}$ the following ``balance'' equality holds.

    \be\label{eqbalancecons}
     \int_{\Omega}u_i(x,t_2)dx-\int_{\Omega}u_i(x,t_1)dx=
     -\Big[\int_{t_1}^{t_2}\int_{\Gamma}f_i(u(x,t))\cdot\nu dS\,dt\Big],\quad i=1,2,\ldots,D.
    \ee
 Here $\nu$ is the outward unit normal to $\Gamma$ and $dS$ is the surface Lebesgue measure.

 \textbf{NOTATION.} Let $X$ be a space of \textit{scalar} functions. Then we denote by $X\bigotimes\RR^D$ the space of vector functions of $D$ components, where each component is an element of $X.$ Thus $C^\infty_0(\Rn)\bigotimes\RR^D$ is the space of $D-$vectors whose components are test functions in $\Rn.$

  Equation ~\eqref{eqbalancecons} can be considered as an integrated (formal) form of ~\eqref{eqconslaw}, using the Gauss-Green theorem.  However, the application of this theorem is certainly not straightforward, since the function $u(x,t)$ is not even continuous (see ~\cite[Section 4.5]{federer}). We refer to ~\cite{chen, silhavy2} and ~\cite[Chapter I]{dafermos} for an abstract discussion of this topic. Regarding the right-hand side of ~\eqref{eqbalancecons} one needs to keep in mind the following comment concerning the identification of the boundary flux: ``the drawback of this, functional analytic, demonstration is that it does not provide any clues on how the $q_\mathfrak{D}$ may be computed from $A$'' ~\cite[Section 1.3]{dafermos}.

In the context of theoretical continuum mechanics the quantity $\int_{A}f(u(x,t))\cdot\nu dS, \,\,A\subseteq\Gamma,$ is referred to as the \textbf{Cauchy flux across $A$} ~\cite{gurtin,silhavy1}. The pointwise value $f(u(x,t))\cdot\nu$ is its density.

We now introduce the notion of a ``solution to the balance law'' as follows.

\begin{defn}\label{defnbalance}  Let $$u_0\in L^1(\Rn)\cap L^\infty(\Rn)\bigotimes\RR^D.$$
 The function $u(\cdot,t)\in C(\Rplusc,L^1(\Rn))\cap L^\infty(\Rplus,L^\infty(\Rn))\bigotimes\RR^D$ is a solution to the balance law ~\eqref{eqbalancecons} corresponding to the partial differential equation  ~\eqref{eqconslaw}  if the following  conditions are satisfied.
    \begin{itemize}
    \item  For every $t\geq 0$ and every smooth bounded domain $\Omega\subseteq\Rn$  the integral $\int_{\Omega}u(x,t)dx$ is well defined and is a continuous function of $t.$
    \item  For every smooth bounded domain $\Omega\subseteq\Rn$ and interval  $[t_1,t_2]\subseteq \overline{\RR_+}$ the trace $$h_i(t_1,t_2)=\int_{t_1}^{t_2}\Big[\int_{\pa\Omega}f_i(u(x,t))\cdot\,\nu\, dS_x\Big]dt,\quad i=1,2,\ldots,D,$$
      is well defined, and is  continuous with respect to suitable perturbations of the boundary $\pa\Omega$ (see Lemma ~\ref{lemtrace} below for details).
      We denoted by $dS_x$ the Lebesgue surface measure on $\pa\Omega.$
     \item The balance equation ~\eqref{eqbalancecons} is satisfied.





    \end{itemize}
    \end{defn}
    \begin{defn}\label{defflux}
    The quantities  $h_i(t_1,t_2),\quad i=1,2,\ldots,D,$ are called the \textbf{fluxes} associated with the conservation law  ~\eqref{eqconslaw}, across the boundary $\pa\Omega$ over the time interval $[t_1,t_2].$
    \end{defn}
    \begin{rem}\label{rembalancelaw} Our definition of a solution to the balance law conforms to that introduced in ~\cite[Chapter I]{dafermos}. In fact, in Dafermos' book the balance equation is assumed to hold for any domain in spacetime. We note that other authors use various other terms, such as the ``integral conservation law'', and the term ``balance law'' is applied to a conservation law with a source term.
    \end{rem}


Definition ~\ref{defnbalance} is closely related to the physical interpretation of fluid mechanics under the continuum hypothesis, that stipulates that the total quantities $\int_{\Omega} u(x,t)dx$ in a fixed domain are  well-defined and continuous in time. The   fluxes $h_i(t_1,t_2)$ are defined over a time interval $[t_1,t_2]$ rather than at any instant $t,$ reflecting  the dynamical process of fluid flows. 

      The important concept of \textbf{divergence-measure vectorfields} was introduced in ~\cite[Definition 1.2]{chen-frid} and then generalized in ~\cite{chen-comi}. In particular if $u(x,t)$ is a weak solution to ~\eqref{eqconslaw} then $(u(x,t),f(u(x,t)) $ is divergence-measure in spacetime. For such vectorfields the Gauss-Green equation can be justified ~\cite[Theorem 2.2]{chen-frid}, provided the domain boundary is a deformable Lipschitz boundary. The resulting  flux turns out to be a Radon measure on the boundary. In our treatment here we treat more specifically weak solutions to the conservation law ~\eqref{eqconslaw}. A distinction is made between the time coordinate and the spatial coordinates. Thus, the vectorfield $f(u(x,t))$ is shown (Subsection ~\ref{subsecsystemlaw}) to be divergence-measure in space, for a.e. time $t.$ However, we do not invoke the Gauss-Green formula at \textit{fixed time levels}, but show (Subsection ~\ref{subsectrace}) that the trace of the flux is well-defined when integrated over time intervals. Imposing a geometric condition on the boundary (stronger than just deformable Lipschitz), as well as on the continuity in time of the total mass,  it is shown (Theorem ~\ref{thmweakbalance}) that the resulting flux is \textit{Lipschitz continuous} with respect to boundary deformations and the balance equation is satisfied.
      
      Finally, while this paper is concerned with theoretical aspects of  the balance law formulation, we emphasize its relevance to the numerical simulation of nonlinear hyperbolic conservation laws. More specifically, it serves as a theoretical basis of  finite volume schemes ~\cite{eymard-gallouet,GodlewskiRaviart}; in fact every cell of the discrete mesh is considered as a ``control volume'' in which the balance law is implemented between arbitrary time levels $t_1<t_2.$ The common points between our treatment here and finite volume schemes can be summarized as follows.
      \begin{itemize}
      \item The fact that the integral $\int_{\Omega}u(x,t)dx$ is assumed to be a continuous function of $t$ is very natural when referring to the conserved quantities, such as mass, momentum and energy.
      \item The construction of approximate  fluxes is a primary building block of the finite volume schemes. The fact that the fluxes (evaluated over time intervals) are Lipschitz continuous places them at the position of the ``most regular elements'' in this context. This is in contrast to the complex discontinuities experienced by the flow variables. It therefore makes good sense, from the numerical point-of-view, to aim at approximating these regular fluxes, and then incorporate the approximate fluxes into the balance law.
          
          This is indeed reflected in the GRP methodology ~\cite{BenArtzi-Falcovitz-2003}, the MUSCL-Hancock  ~\cite{MUSCL} scheme, as well as the full plethora of ``Godunov-type'' schemes.

\end{itemize}

\section{\textbf{THE FUNDAMENTAL PRINCIPLE OF THE HYPERBOLIC BALANCE LAW}}\label{secbalancelaw}
As is well known,  the meaning of the $x$ and $t$ derivatives in the conservation equation ~\eqref{eqconslaw} must be clarified since the solutions generate discontinuities, such as shocks or interfaces. The concept of a \textbf{weak solution} is introduced  precisely in order to handle this difficulty  ~\cite[Chapter 11]{evans}, as follows.

     \begin{defn}\label{defweak} The function $u(x,t)$ is a weak solution of ~\eqref{eqconslaw} if the following condition is satisfied:
    for every cylinder $Q=\Omega\times[t_1,t_2]\subseteq \Rn\times\Rplusc,$ if
     $$\phi(x,t)=(\phi_1(x,t),\ldots,\phi_D(x,t)\in C^\infty_0(Q)\bigotimes \RR^D,\,$$ then
        \be\label{eqweaksol}
        \suml_{i=1}^D\int_{t_1}^{t_2}\int_{\Omega}[u_i(x,t)\frac{\pa}{\pa t}\phi_i+ f_i(u(x,t))\cdot \nabla_x\phi_i]dx\,dt=0.
        \ee
      \end{defn}
\subsection{\textbf{BOUNDEDNESS OF THE FLUX DIVERGENCE }}\label{subsecsystemlaw}

       Definition ~\ref{defweak} is a mathematical artifact and does not yield (in a straightforward fashion) the desired balance equality ~\eqref{eqbalancecons}.  The following lemma pretty much summarizes what can be said about the pointwise regularity of the flux function. Observe that in the one-dimensional (spatial) case the lemma already implies the Lipschitz regularity of the flux ~\cite{mathcomp}. Nevertheless, this is not true in the higher dimensional case.

      \begin{lem}\label{lemweakbalance} Let $u(x,t)$ be a weak solution to the system ~\eqref{eqconslaw}, with initial function $u_0\in L^1(\Rn)\cap L^\infty(\Rn)\bigotimes\RR^D.$

      Assume that $u(x,t)$ satisfies the following properties.

      \begin{itemize}\item $u(x,t)$ is locally bounded in $\Rn\times\Rplusc.$

      \item For every fixed bounded $\Omega\subseteq\Rn$ the mass
      \be\label{eqmtcont} m(t)=\int\limits_{\Omega} u(x,t)dx\,\,   \mbox{is a well-defined and continuous function of}\,\, t\in\Rplusc.\ee
      \end{itemize}
      Then
  for every fixed $[t_1,t_2]\subseteq\RR$ the function $g(x;t_1,t_2)=\int_{t_1}^{t_2}f(u(x,t))dt$ satisfies $\nabla_x\cdot g(x;t_1,t_2)\in L^\infty_{loc} (\Rn)\bigotimes \RR^D.$
   \end{lem}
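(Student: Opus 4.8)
The plan is to compute the distributional spatial divergence of $g(\cdot;t_1,t_2)$ \emph{explicitly}. I will show that, componentwise,
\[
\nabla_x\cdot g_i(\cdot;t_1,t_2)=u_i(\cdot,t_1)-u_i(\cdot,t_2)\quad\text{in }\mathcal{D}'(\Rn),\qquad i=1,\dots,D,
\]
after which the assertion is immediate: $u$ is locally bounded, so each $u_i(\cdot,t_1)-u_i(\cdot,t_2)$ lies in $L^\infty_{loc}(\Rn)$, whence $\nabla_x\cdot g\in L^\infty_{loc}(\Rn)\bigotimes\RR^D$. Note first that, $f$ being continuous, $f(u(x,t))$ is measurable and locally bounded on $\Rn\times\Rplusc$, so $g_i(x)=\int_{t_1}^{t_2}f_i(u(x,t))\,dt$ is a well-defined, locally bounded (hence $L^1_{loc}$) $\Rn$-valued function and $\nabla_x\cdot g_i$ is a bona fide distribution.

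Fix $i$ and a scalar $\psi\in C_0^\infty(\Rn)$, and let $\Omega$ be an open ball containing $\operatorname{supp}\psi$; assume $0<t_1<t_2$ (the case $t_1=0$ is addressed below). For $0<\delta<t_1$ choose $\theta_\delta\in C_0^\infty\big((t_1-\delta,t_2+\delta)\big)$ with $0\le\theta_\delta\le1$, $\theta_\delta\equiv1$ on $[t_1,t_2]$, $\theta_\delta$ nondecreasing on $[t_1-\delta,t_1]$ and nonincreasing on $[t_2,t_2+\delta]$. Insert into the weak formulation \eqref{eqweaksol}, on the cylinder $Q=\Omega\times[t_1-\delta,t_2+\delta]$, the test vector whose $i$-th component is the (jointly smooth, compactly supported in $\operatorname{int}Q$) function $\psi(x)\theta_\delta(t)$ and whose other components vanish; only the $i$-th summand survives and one obtains
\[
\int_{t_1-\delta}^{t_2+\delta}\int_\Omega f_i(u(x,t))\cdot\nabla_x\psi(x)\,\theta_\delta(t)\,dx\,dt=-\int_{t_1-\delta}^{t_2+\delta}\int_\Omega u_i(x,t)\,\psi(x)\,\theta_\delta'(t)\,dx\,dt.
\]
As $\delta\to0^+$ the left side tends to $\int_{t_1}^{t_2}\int_\Omega f_i(u)\cdot\nabla_x\psi\,dx\,dt=-\langle\nabla_x\cdot g_i,\psi\rangle$, since $\theta_\delta-\mathbf{1}_{[t_1,t_2]}$ is bounded by $1$ and supported on a set of measure $2\delta$, while $f_i(u)\cdot\nabla_x\psi$ is bounded on $\operatorname{supp}\psi\times[0,t_2+1]$. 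For the right side, set $M_i^\psi(t)=\int_{\Rn}\psi(x)u_i(x,t)\,dx$; by Fubini it equals $-\int\theta_\delta'(t)M_i^\psi(t)\,dt$, and since $\theta_\delta'$ is nonnegative on $[t_1-\delta,t_1]$ with integral $1$, nonpositive on $[t_2,t_2+\delta]$ with integral $-1$, and zero elsewhere, the continuity of $M_i^\psi$ (established in the next paragraph) gives $-\int\theta_\delta'M_i^\psi\,dt\to M_i^\psi(t_2)-M_i^\psi(t_1)$. Equating the two limits, $\langle\nabla_x\cdot g_i,\psi\rangle=M_i^\psi(t_1)-M_i^\psi(t_2)=\int_{\Rn}\psi(x)\big(u_i(x,t_1)-u_i(x,t_2)\big)\,dx$ for every $\psi$, which is the claimed identity.

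It remains to show $t\mapsto M_i^\psi(t)$ is continuous. Hypothesis \eqref{eqmtcont} gives continuity of $t\mapsto\int_\Omega u_i(x,t)\,dx$ for every bounded $\Omega\subseteq\Rn$, hence by linearity of $t\mapsto\int_{\Rn}\varphi(x)u_i(x,t)\,dx$ for every finite linear combination $\varphi$ of indicator functions of bounded subsets of $\Rn$. Given $\psi$, choose such $\varphi_n$ supported in a fixed bounded $K\supseteq\operatorname{supp}\psi$ with $\|\psi-\varphi_n\|_{L^\infty}\to0$; using $|u_i|\le C$ on $K\times[0,t_2+1]$ we get $\sup_{t\in[0,t_2+1]}\big|M_i^\psi(t)-\int_{\Rn}\varphi_n u_i(x,t)\,dx\big|\le C|K|\,\|\psi-\varphi_n\|_{L^\infty}\to0$, so $M_i^\psi$ is a locally uniform limit of continuous functions, hence continuous. (When $t_1=0$ the same computation applies with $u_i(\cdot,0)$ replaced by the initial datum $u_{0,i}$, using a one-sided left cutoff and the initial-data form of the weak formulation.)

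The main obstacle is not the algebra but two points: upgrading the hypothesis on the masses $\int_\Omega u\,dx$ to continuity of the $\psi$-weighted masses $M_i^\psi$ — this is exactly where local boundedness of $u$ enters in an essential way — and the careful bookkeeping of the $\delta\to0$ limit so that the two contributions of $\theta_\delta'$ are precisely $u(\cdot,t_1)$ and $u(\cdot,t_2)$. Once these are secured, the identity $\nabla_x\cdot g=u(\cdot,t_1)-u(\cdot,t_2)$, and with it the desired $L^\infty_{loc}$ regularity, follows at once.
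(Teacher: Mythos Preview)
Your proof is correct and follows essentially the same route as the paper's: separated test functions $\psi(x)\theta(t)$ in the weak formulation, then a limit in the time cutoff to arrive at the identity $\int_\Omega g\cdot\nabla\psi\,dx=\int_\Omega[u(x,t_2)-u(x,t_1)]\cdot\psi(x)\,dx$. You are in fact more direct than the paper (which, after obtaining this very identity, somewhat redundantly invokes $L^1$--$L^\infty$ duality instead of simply reading off $\nabla_x\cdot g=u(\cdot,t_1)-u(\cdot,t_2)\in L^\infty_{loc}$) and more careful (you explicitly upgrade the continuity hypothesis~\eqref{eqmtcont} from the masses $\int_\Omega u\,dx$ to the weighted masses $\int\psi\,u\,dx$, a step the paper leaves implicit in its ``letting $\varepsilon\to0$'').
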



   \begin{proof}For every cylinder $Q=\Omega\times[t_1,t_2]\subseteq \Rn\times\Rplusc$ we define
   \be\label{equxtmaxu0}
      C_Q=\sup\set{ |u(x,t)|, \quad (x,t)\in Q}.
       \ee
    Note that in ~\eqref{eqbalancecons},  the ``fixed time'' integrals in the left-hand side exist by the assumed continuity (in time) of $m(t).$ Pick $\phi(x,t)=\theta(t)\psi(x)$ in Equation ~\eqref{eqweaksol}, where $\theta\in C^\infty_0(t_1,t_2)$ and $\psi\in C^\infty_0(\Omega)\bigotimes\RR^D.$ Take $0\leq\theta\leq 1$ and $\theta(t)=1$ for $t_1+\eps\leq t\leq t_2-\eps.$ Letting $\eps\to 0,$ Equation ~\eqref{eqweaksol} yields
       \be\label{eqintuf}
        \int_{\Omega}[u(x,t_2)-u(x,t_1)]\cdot\psi(x)dx=\int_{\Omega}\int_{t_1}^{t_2}f(u(x,t))dt\cdot\nabla\psi(x)dx.
       \ee
 Equation ~\eqref{eqintuf} can be rewritten as
 $$
          \int_{\Omega}[u(x,t_2)-u(x,t_1)]\cdot\psi(x)dx=\int_{\Omega}g(x;t_1,t_2)\cdot\nabla\psi(x)dx.
$$
Since $|u(x,t)|\leq C_Q$ it follows that
         \be\label{eqestGpsi}
         \Big|\int_{\Omega}g(x;t_1,t_2)\cdot\nabla\psi(x)dx\Big|\leq 2C_Q\|\psi\|_1.
         \ee
 Define the linear functional for $\psi\in C^\infty_0(\Omega)\bigotimes\RR^D$
          $$
          \Gcal\psi=\int_{\Omega}g(x;t_1,t_2)\cdot\nabla\psi(x)dx=\suml_{i=1}^D \int_{\Omega}g_i(x;t_1,t_2)\cdot\nabla\psi_i(x)dx.
          $$
          The estimate ~\eqref{eqestGpsi} shows that $\Gcal$ is continuous with respect to the $L^1$ norm. The density of $C^\infty_0(\Omega)$ in $L^1(\Omega)$ and the $L^1,\,L^\infty$ duality entail that there exists a function $r(x)\in L^\infty(\Omega)\bigotimes\RR^D$ such that
          \be\label{eqdualgr}
           \int_{\Omega}g(x;t_1,t_2)\cdot\nabla\psi(x)dx=\int_{\Omega}r(x)\cdot\psi(x)dx,\quad \psi\in C^\infty_0(\Omega)\bigotimes\RR^D.
          \ee
           We conclude that the distributional divergence of $g(x;t_1,t_2)$ satisfies $\nabla_x\cdot g(x;t_1,t_2)=-r(x)$ in $\Omega.$

           This concludes the proof of the lemma.
       \end{proof}
           \begin{rem}\label{remmtcont} We could replace the continuity
           assumption ~\eqref{eqmtcont} by the stronger assumption that the map $t\to u(\cdot,t)\in L^\infty(\RR)\,\mbox{weak}^\ast$ is continuous. This latter assumption is universally imposed when dealing with entropy solutions to nonlinear conservation laws ~\cite[Section 4.5]{dafermos}. However the continuity condition ~\eqref{eqmtcont} is valid for weak solutions that are not necessarily entropy solutions. In fact, it holds for weak solutions that have bounded (locally in time) total variation. This is expressed by Dafermos as ``mechanism of regularity transfer from the spatial to the temporal variables'' ~\cite[Theorem 4.3.1]{dafermos}.
           \end{rem}

           \subsection{\textbf{TRACES OF FLUXES--GEOMETRIC APPROACH}}\label{subsectrace}

           In order to replace ``weak solutions'' by ``solutions to balance laws'' and make good sense of Equation ~\eqref{eqbalancecons} we need to establish the meaning of \textit{fluxes across domain boundaries.} The regularity result of Lemma ~\ref{lemweakbalance} falls short of this goal. We therefore need to address directly such traces.

           Let $\Omega=\Omega_0\subseteq\Rn$ be a bounded domain with smooth boundary $\Gamma=\Gamma_0=\pa\Omega.$

          Starting with $\Gamma_0$ we can construct a tubular neighborhood ~\cite[Chapter 9, Addendum]{spivak} with the following properties. For some small $0<\delta<1$ there exists family of ``expanding''  smooth bounded domains $\set{\Omega_y\subseteq\Rn,\,\,y\in(-\delta,1-\delta)}$  so that their respective boundaries $\set{\Gamma_y,\,\,y\in(-\delta,1-\delta)}$ form a foliation of a tubular neighborhood of $\Gamma_0.$ The coordinate  $y\in (-\delta,1-\delta)$ is normal to $\Gamma_y$ so that $\frac{\pa}{\pa y}=\nu$ is the unit normal. We designate by $dS_y$ the Lebesgue  surface measure on $\Gamma_y,\,\,y\in(-\delta,1-\delta).$

          In direct continuation to Lemma ~\ref{lemweakbalance} we now have.

          \begin{lem}
          \label{lemtrace} Let $u(x,t)$ be a weak solution to the system ~\eqref{eqconslaw}, with initial function $u_0\in L^1(\Rn)\cap L^\infty(\Rn)\bigotimes\RR^D.$

      Assume that $u(x,t)$ satisfies the following properties.

      \begin{itemize}\item $u(x,t)$ is locally bounded in $\Rn\times\Rplusc.$

      \item For every fixed bounded $\Omega\subseteq\Rn$ the mass
      \be\label{eqmtconta} m(t)=\int\limits_{\Omega} u(x,t)dx\,\,   \mbox{is a well-defined and continuous function of}\,\, t\in\Rplusc.\ee
      \end{itemize}
      For every smooth domain $\Omega$ and the geometric construction above,
      and for every fixed $[t_1,t_2]\subseteq\RR$ define  the trace function $h(y;t_1,t_2)=(h_1(y;t_1,t_2),\ldots,h_D(y;t_1,t_2))$ by
      $$h_i(y;t_1,t_2)=\int_{t_1}^{t_2}\Big[\int_{\Gamma_y}f_i(u(x,t))\cdot\,\nu\, dS_y\Big]dt,\quad i=1,2,\ldots,D,\,\,y\in (-\delta,1-\delta).$$
  Then $h$ is Lipschitz continuous with respect to $y\in(-\delta,1-\delta).$
          \end{lem}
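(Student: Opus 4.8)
The plan is to deduce the Lipschitz continuity in $y$ directly from the divergence bound of Lemma~\ref{lemweakbalance} together with the coarea formula; at no stage will the Gauss--Green formula at a fixed time be invoked. Throughout, fix $[t_1,t_2]$ and abbreviate $g(x)=g(x;t_1,t_2)=\int_{t_1}^{t_2}f(u(x,t))\,dt$ and $h_i(y)=h_i(y;t_1,t_2)$. By Lemma~\ref{lemweakbalance}, each component $g_i$ (an $\Rn$-valued field) lies in $L^\infty_{loc}(\Rn)$ and satisfies $\nabla_x\cdot g_i=-r_i$ distributionally with $r_i\in L^\infty_{loc}(\Rn)$. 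Let $N$ be the tubular neighborhood foliated by the leaves $\Gamma_y$, taken with compact closure, and let $y=y(x)$ be the normal coordinate on $N$, so that $\nabla_x y=\nu$, $|\nabla_x y|\equiv1$, and $\|r_i\|_{L^\infty(N)}$ together with $M:=\sup_{y\in(-\delta,1-\delta)}\int_{\Gamma_y}dS_y$ are finite. Since $f_i(u(\cdot,\cdot))$ is bounded and measurable on $N\times[t_1,t_2]$, the coarea formula (using $|\nabla_x y|\equiv1$) and Tonelli's theorem show that for a.e.\ $y$ the iterated integral defining $h_i(y)$ equals $\int_{\Gamma_y}g_i(x)\cdot\nu\,dS_y$, and that $y\mapsto h_i(y)$ is the $L^1_{loc}(-\delta,1-\delta)$ density of the measure $A\mapsto\int_{\{x\in N:\,y(x)\in A\}}g_i\cdot\nu\,dx$. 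This $L^1_{loc}$ function is the object whose regularity I will establish; the continuous representative obtained below is then the function named in the statement.

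The key step is to compute the distributional derivative of $h_i$ on $(-\delta,1-\delta)$. Given $\phi\in C^\infty_0(-\delta,1-\delta)$, the function $x\mapsto\phi(y(x))$ (extended by zero outside $N$) belongs to $C^\infty_0(\Rn)$ with support in a compact subset of $N$, and $\nabla_x[\phi(y(x))]=\phi'(y(x))\,\nu(x)$. Applying the coarea formula to $g_i\cdot\nu\,\phi'(y(\cdot))$, then the definition of $\nabla_x\cdot g_i$ tested against $\phi(y(\cdot))$, and then the coarea formula once more to $r_i\,\phi(y(\cdot))$, one obtains
$$\int_{-\delta}^{1-\delta}h_i(y)\,\phi'(y)\,dy=\int_N g_i(x)\cdot\nabla_x[\phi(y(x))]\,dx=\int_N r_i(x)\,\phi(y(x))\,dx=\int_{-\delta}^{1-\delta}R_i(y)\,\phi(y)\,dy,$$
where $R_i(y)=\int_{\Gamma_y}r_i(x)\,dS_y$. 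Hence $\frac{d}{dy}h_i(y)=-R_i(y)$ in $\mathcal D'(-\delta,1-\delta)$.

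To conclude, estimate $|R_i(y)|\le\|r_i\|_{L^\infty(N)}\int_{\Gamma_y}dS_y\le M\,\|r_i\|_{L^\infty(N)}$, so $R_i\in L^\infty(-\delta,1-\delta)$; a function on an interval whose distributional derivative is bounded has a Lipschitz representative with constant at most $\|R_i\|_{\infty}$, and this yields the claim, with a Lipschitz constant for $h$ controlled by the $L^\infty$ bound on $\nabla_x\cdot g$ near $\Gamma_0$ and by the geometry of the foliation. I expect the only genuinely delicate point to be the measure-theoretic bookkeeping of the first paragraph: for a merely bounded measurable $u$, the restriction of $f(u(\cdot,t))$ to an individual hypersurface $\Gamma_y$ carries no meaning, so one must treat $h_i(\,\cdot\,)$ as an $L^1_{loc}$ function first and recover its pointwise interpretation only a posteriori, once the Lipschitz bound is in hand. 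Beyond this, the argument is a routine assembly of the coarea formula, Tonelli's theorem, and the divergence bound already proved in Lemma~\ref{lemweakbalance}; indeed the content of Lemma~\ref{lemtrace} over Lemma~\ref{lemweakbalance} is precisely the passage from a bound on $\nabla_x\cdot g$ to a bound on the oscillation of the flux of $g$ through the leaves $\Gamma_y$, which the coarea formula supplies at once.
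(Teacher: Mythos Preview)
Your proof is correct and follows essentially the same approach as the paper: both pick test functions depending only on the normal coordinate $y$, reduce the weak identity to $\int h_i(y)\,\phi'(y)\,dy = \int R_i(y)\,\phi(y)\,dy$ with $R_i\in L^\infty$, and conclude that $h$ has bounded distributional $y$-derivative and hence a Lipschitz representative. You factor through the divergence bound of Lemma~\ref{lemweakbalance} and invoke the coarea formula explicitly, whereas the paper works directly from the identity $\int_{\widetilde\Omega}[u(\cdot,t_2)-u(\cdot,t_1)]\cdot\psi = \int_{\widetilde\Omega} g\cdot\nabla\psi$; since in fact $r_i = u_i(\cdot,t_2)-u_i(\cdot,t_1)$, the two computations coincide, and your care in first treating $h_i$ as an $L^1_{loc}$ function before selecting the Lipschitz representative makes explicit a point the paper leaves implicit.
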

         \begin{proof}
         As in the proof of Lemma ~\ref{lemweakbalance} we obtain (see ~\eqref{eqintuf}) for every smooth domain $\widetilde{\Omega}$
         \be\label{eqintufgena}
        \int_{\widetilde{\Omega}}[u(x,t_2)-u(x,t_1)]\cdot\psi(x)dx=\int_{\widetilde{\Omega}}\int_{t_1}^{t_2}f(u(x,t))dt\cdot\nabla\psi(x)dx.
       \ee
          We construct $\widetilde{\Omega}$ as the tubular domain $$\widetilde{\Omega}=\cup\set{\Gamma_y,\,\,y\in (-\delta,1-\delta)}.$$

          Let $\psi\in C^\infty_0(\widetilde{\Omega})\bigotimes\RR^D$ such that
            \be
            \psi (x)=\theta(y),\quad x\in\Gamma_y,
            \ee
            where $\theta(y)\in C^\infty_0[-\delta,1-\delta)\bigotimes\RR^D.$

            Equation ~\eqref{eqintufgena} can now be rewritten as (where $\theta=(\theta_1,\ldots,\theta_D)$)
            $$
        \int_{\widetilde{\Omega}}[u(x,t_2)-u(x,t_1)]\cdot\psi(x)dx=\suml_{i=1}^D\int_{t_1}^{t_2}\Big[\int_{-\delta}^{1-\delta}\int_{\Gamma_y}f_i(u(x,t))\frac{\pa}{\pa y}\theta_i(y)\cdot\nu\, dS_y dy\Big]dt,
       $$
       namely
       \be\label{eqintufgen}
       \int_{\widetilde{\Omega}}[u(x,t_2)-u(x,t_1)]\cdot\psi(x)dx=\suml_{i=1}^D\int_{-\delta}^{1-\delta}h_i(y;t_1,t_2)\frac{\pa}{\pa y}\theta_i(y)dy.
       \ee
       Define the linear functional
          $$
          \Gcal\theta=\suml_{i=1}^D\int_{-\delta}^{1-\delta}h_i(y;t_1,t_2)\frac{\pa}{\pa y}\theta_i(y)dy,\quad \theta(y)\in C^\infty_0(-\delta,1-\delta)\bigotimes\RR^D.
          $$
          From ~\eqref{eqintufgen} and the boundedness assumption on $u$ we infer that
           $\Gcal$ is continuous with respect to the $L^1(-\delta,1-\delta)$ norm. The density of $C^\infty_0(-\delta,1-\delta)$ in $L^1(-\delta,1-\delta)$ and the $L^1,\,L^\infty$ duality entail that there exists a function $r(y)\in L^\infty(-\delta,1-\delta)\bigotimes\RR^D$ such that
          \be\label{eqdualgrtrace}
           \int_{-\delta}^{1-\delta}h(y;t_1,t_2)\cdot\frac{\pa}{\pa y}\theta(y)dy=\int_{-\delta}^{1-\delta}r(y)\cdot\theta(y)(y)dy,\quad \theta\in C^\infty_0(-\delta,1-\delta)\bigotimes\RR^D.
          \ee
           It follows that the distributional derivative $\frac{\pa}{\pa y}h(y;t_1,t_2)=-r(y)$ is bounded, which concludes the proof of the lemma.
         \end{proof}

         We summarize the above result as the \textit{fundamental theorem of fluxes.}
         \begin{thm}\label{thmweakbalance}
         Let $u(x,t)$ be a weak solution to the system ~\eqref{eqconslaw}, with initial function $u_0\in L^1(\Rn)\cap L^\infty(\Rn)\bigotimes\RR^D.$

      Assume that $u(x,t)$ satisfies the following properties.

      \begin{itemize}\item $u(x,t)$ is locally bounded in $\Rn\times\Rplusc.$

      \item For every fixed bounded $\Omega\subseteq\Rn$ the mass
      \be\label{eqmtcontb} m(t)=\int\limits_{\Omega} u(x,t)dx\,\,   \mbox{is a well-defined and continuous function of}\,\, t\in\Rplusc.\ee
      \end{itemize}
       Then for any smooth bounded domain $\Omega\subseteq\Rn$ and for every time interval $[t_1,t_2]$ the flux $$h_i(t_1,t_2)=\int_{t_1}^{t_2}\int_{\pa\Omega}f_i(u(x,t))\cdot\nu dS\,dt,\quad i=1,2,\ldots,D,$$ is well defined and Equation ~\eqref{eqbalancecons} holds.

         \end{thm}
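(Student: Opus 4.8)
The plan is to combine Lemma~\ref{lemtrace} with an approximation of the indicator function $\mathbf 1_\Omega$ by test functions adapted to the foliation $\{\Gamma_y\}$ constructed around $\Gamma_0=\pa\Omega$. Reparametrizing the tubular neighborhood if necessary, we may assume that the leaf $\Gamma_0$ corresponds to the value $y=0$. By Lemma~\ref{lemtrace} the trace function $y\mapsto h(y;t_1,t_2)$ is Lipschitz continuous on $(-\delta,1-\delta)$; moreover, since $f(u)\in L^1(\widetilde\Omega\times[t_1,t_2])$ by the local boundedness of $u$, Fubini in the tube shows that $h(y;t_1,t_2)$ is an honest iterated integral for a.e.\ $y$, and it extends continuously to every $y$. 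One then sets $h_i(t_1,t_2):=h_i(0;t_1,t_2)$ and proves that it equals the mass difference on the left-hand side of~\eqref{eqbalancecons}; since that difference depends only on $\Omega$, this simultaneously establishes the balance equation and the well-definedness of the flux (its independence of the chosen foliation).

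First I would fix a component index $i$ and choose a family $\psi_k\in C^\infty_0(\Rn)\bigotimes\RR^D$ with only the $i$-th entry nonzero, of the form $\psi_k(x)=\theta_k(y)$ on the collar $\widetilde\Omega$, where $0\le\theta_k\le1$, $\theta_k\equiv1$ for $y\le0$, $\theta_k\equiv0$ for $y\ge 1/k$, extended by $1$ on the part of $\Omega$ interior to the collar and by $0$ outside; all the $\psi_k$ are then supported in one fixed smooth bounded domain $\Omega'$. Since $\frac{\pa}{\pa y}=\nu$ along the foliation, $\nabla\psi_k=\theta_k'(y)\,\nu$ is supported in the shrinking collar $\{0<y<1/k\}$. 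Applying identity~\eqref{eqintufgena} with $\widetilde\Omega$ replaced by $\Omega'$, writing $g(x;t_1,t_2)=\int_{t_1}^{t_2}f(u(x,t))\,dt$, and using the coarea formula $\int g\,dx=\int\!\!\int_{\Gamma_y}g\,dS_y\,dy$ — valid because $y$ is the signed normal coordinate, so $|\nabla y|\equiv1$ — together with Fubini in $(y,t)$ (legitimate for a.e.\ $y$), the right-hand side becomes exactly $\int h_i(y;t_1,t_2)\,\theta_k'(y)\,dy$, as in~\eqref{eqintufgen}.

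Then I would pass to the limit $k\to\infty$. On the left, $\psi_k\to\mathbf 1_\Omega$ boundedly and a.e.\ (the hypersurface $\Gamma_0$ being Lebesgue-null), so by dominated convergence — using the local boundedness of $u$ and the fact that the fixed-time integrals $\int_\Omega u_i(x,t_j)\,dx$ exist by the continuity-of-mass hypothesis~\eqref{eqmtcontb} — the left-hand side tends to $\int_\Omega u_i(x,t_2)\,dx-\int_\Omega u_i(x,t_1)\,dx$. On the right, $\theta_k'\le0$, $\int\theta_k'=-1$, and $\mathrm{supp}\,\theta_k'\subseteq[0,1/k]$, so $\theta_k'$ converges as $k\to\infty$ to minus the Dirac mass at $y=0$; since $h_i(\cdot;t_1,t_2)$ is continuous at $0$ by Lemma~\ref{lemtrace}, $\int h_i(y;t_1,t_2)\,\theta_k'(y)\,dy\to -h_i(0;t_1,t_2)$. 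Equating the limits yields $\int_\Omega u_i(x,t_2)\,dx-\int_\Omega u_i(x,t_1)\,dx=-h_i(0;t_1,t_2)=-h_i(t_1,t_2)$, which is precisely~\eqref{eqbalancecons}.

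I expect the main obstacle to be conceptual rather than computational: because $u(\cdot,t)$ is only an $L^\infty_{loc}$, hence a.e.-defined, function, the literal restriction of $f(u(x,t))$ to the measure-zero hypersurface $\pa\Omega$ at a single time level has no intrinsic meaning, so the object that must be controlled is the time-integrated, leaf-averaged flux $h_i(y;t_1,t_2)$ — and the content of Lemma~\ref{lemtrace} is exactly that this object possesses a Lipschitz (hence continuous) representative in $y$. The identity derived above is what resolves the apparent ambiguity: its right-hand side involves only $\Omega$ and $[t_1,t_2]$, so $h_i(t_1,t_2)$ is well defined independently of the foliation used to construct it. The remaining technical points — the coarea identity, the Fubini interchange valid for a.e.\ leaf, and the dominated-convergence passage near $\pa\Omega$ — are routine consequences of the smoothness of $\Gamma_0$ and the boundedness of $u$ on compact sets.
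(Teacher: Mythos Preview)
Your proposal is correct and follows essentially the same approach as the paper: approximate $\mathbf 1_\Omega$ by smooth cutoffs constant on the leaves $\Gamma_y$, plug into the identity~\eqref{eqintuf}/\eqref{eqintufgena}, and pass to the limit using the Lipschitz continuity of $y\mapsto h(y;t_1,t_2)$ from Lemma~\ref{lemtrace}. The only cosmetic difference is that you let the cutoffs approach $\mathbf 1_\Omega$ from the outside (support in $\Omega_{1/k}$, gradient concentrating on $\{0<y<1/k\}$) while the paper approaches from the inside ($\psi_i\in C^\infty_0(\Omega_0)$ with $\psi_i\equiv 1$ on $\Omega_{-\delta}$, then $\delta\to 0$); since $h$ is continuous at $y=0$ from both sides, the two limits agree.
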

         \begin{proof}
         In light of Lemma ~\ref{lemtrace} it only remains to establish the validity of the balance equation ~\eqref{eqbalancecons}. Starting from Equation
         ~\eqref{eqintuf} and using the geometric construction above, we select the test function $\psi(x)=(\psi_1(x),\ldots,\psi_D(x))$ as follows.

            $\psi_i(x)\in C^\infty_0(\Omega_0)$ and $\psi_i(x)\equiv 1,\,\,x\in \Omega_{-\delta}.$

            Letting $\delta\to 0$ and using the continuity of the traces obtained in Lemma ~\ref{lemtrace} we obtain ~\eqref{eqbalancecons}.
         \end{proof}

       The statement of Theorem ~\ref{thmweakbalance} is closely related to the more fluid dynamical viewpoint: the ``conservation law'', which is a \textbf{partial differential equation,} is replaced by a \underline{``balance law''}.

      Note that as in the case of weak solutions, no uniqueness assumption is imposed on the solution.

      Theorem ~\ref{thmweakbalance} implies that a weak solution satisfying certain hypotheses (in particular an entropy solution) is a solution to the balance law  in the sense of Definition ~\ref{defnbalance}. It is easy to see that conversely, a solution to the balance law  is a weak solution of the  conservation law ~\eqref{eqconslaw}.

      An important observation is that the flux $h(t_1,t_2)$ is defined \textit{over a time interval.} In other words, there is no meaning attached to the instantaneous value $\int_{\pa\Omega}f_i(u(x,t))\cdot\nu dS.$ However, the flux is continuous with respect to the time interval, as in the following proposition.
      \begin{prop}\label{propcontfluxt} Under the conditions of Theorem
      ~\ref{thmweakbalance}
          the flux $h(t_1,t_2)$ is continuous with respect to $t_1,\,t_2.$
      \end{prop}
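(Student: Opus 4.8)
The plan is to recognize that, once Theorem \ref{thmweakbalance} has been established, the flux is no longer an independent object: it is completely determined by the total masses at the two endpoints. Concretely, the balance equation \eqref{eqbalancecons}, which holds by Theorem \ref{thmweakbalance}, can be read as
\[
h_i(t_1,t_2)=\int_{\Omega}u_i(x,t_1)\,dx-\int_{\Omega}u_i(x,t_2)\,dx=m_i(t_1)-m_i(t_2),\qquad i=1,\ldots,D,
\]
where $m(t)=(m_1(t),\ldots,m_D(t))$, $m_i(t)=\int_{\Omega}u_i(x,t)\,dx$, is the mass function appearing in \eqref{eqmtcontb}.

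Next I would simply invoke the standing hypothesis \eqref{eqmtcontb}: each component $m_i(\cdot)$ is a well-defined and continuous function of $t\in\Rplusc$. Consequently the map $(t_1,t_2)\mapsto h_i(t_1,t_2)=m_i(t_1)-m_i(t_2)$ is jointly continuous on $\set{(t_1,t_2):0\le t_1\le t_2}$, being a difference of continuous functions of the respective endpoints. In particular, letting $t_2\downarrow t_1$ (or $t_1\uparrow t_2$) gives $h_i(t_1,t_2)\to 0$, which is the natural statement that the net flux across $\pa\Omega$ over a vanishing time interval vanishes.

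I do not expect any genuine obstacle here: all the analytical content has already been absorbed into Lemma \ref{lemtrace} and Theorem \ref{thmweakbalance}. The one point worth emphasizing is that one cannot argue directly from the defining formula $h_i(t_1,t_2)=\int_{t_1}^{t_2}\int_{\pa\Omega}f_i(u(x,t))\cdot\nu\,dS\,dt$, since the inner surface integral carries no pointwise meaning at a fixed time $t$; it is precisely the passage through the balance law — equivalently, through the specialization used in the proof of Theorem \ref{thmweakbalance}, taking $\psi\equiv 1$ near $\Gamma_0$ — that rewrites the time-interval flux as the difference $m(t_1)-m(t_2)$ and thereby transfers the assumed continuity of $m$ to $h$. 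No additional regularity of $u$ in time is needed.
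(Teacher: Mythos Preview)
Your argument is correct and is exactly the paper's approach: rewrite $h_i(t_1,t_2)$ via the balance equation \eqref{eqbalancecons} as $m_i(t_1)-m_i(t_2)$ and invoke the assumed continuity of $m(t)$ in \eqref{eqmtcontb}. Your additional remark that one cannot argue directly from the surface-integral formula is a helpful clarification but not needed for the proof itself.
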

      \begin{proof}
        This follows from the balance equation  ~\eqref{eqbalancecons} and the assumption about the continuity of $m(t).$
      \end{proof}
      It is easy to see how to generalize the theorem to bounded domains with piecewise-smooth boundaries. From the point of view of applications, the most important instance is that of polygonal domains. For finite-volume schemes on regular meshes, every cell is  a rectangular box, and we state the result explicitly for this case.
      \begin{cor}
         Let $u(x,t)$ be a weak solution to the system ~\eqref{eqconslaw}, satisfying the conditions of Theorem ~\ref{thmweakbalance}. Let
         $$\Omega=\prod\limits_{i=1}^n[a_i,x_i],$$
           and let $$S_j=\set{y=(y_1,\ldots,y_{j-1},x_j,y_{j+1},\ldots,y_n),\quad y_i\in [a_i,x_i],\,\, i\neq j}$$
           be the  section of $\pa\Omega$ at $x_j.$

           For any $1\leq j\leq n$ and any $0\leq t_1<t_2$ define the flux $$F^j(x_j;t_1,t_2)=\int_{t_1}^{t_2}\int_{S_j}f(u(y,t))\cdot e_jdS_y\,dt\in \RR^D,$$ where $e_j$ is the unit vector in the $x_j$ direction.

           Then $F^j(x_j;t_1,t_2)$ is well defined and indeed is a locally Lipschitz function of $x_j.$ Furthermore, the following balance equation holds.
           \be
           \int_{\Omega}u(x,t_2)dx-\int_{\Omega}u(x,t_1)dx=
     -\suml_{j=1}^n\Big[F^j(x_j;t_1,t_2)-F^j(a_j;t_1,t_2)\Big].
    \ee

      \end{cor}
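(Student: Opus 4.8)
The plan is to split the statement into two parts: (i) the existence of the fluxes $F^j(x_j;t_1,t_2)$ together with the balance equation, which I would obtain as the instance of Theorem~\ref{thmweakbalance} for box (more generally, piecewise-smooth) domains; and (ii) the local Lipschitz dependence of $F^j$ on $x_j$, which I would extract from the box balance equation applied to a one-parameter family of thin slabs. For (i), I would approximate $\Omega=\prod_{i=1}^n[a_i,x_i]$ by smooth bounded domains $\Omega^\eps$ obtained by rounding the edges of $\pa\Omega$ inside an $\eps$-collar, invoke Theorem~\ref{thmweakbalance} for each $\Omega^\eps$, and let $\eps\to0$. The left-hand side of~\eqref{eqbalancecons} converges because $u(\cdot,t_j)\in L^1(\Rn)$ and $|\Omega^\eps\triangle\Omega|\to0$. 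On the right-hand side I would separate $\pa\Omega^\eps$ into its flat portions (lying in the hyperplanes that carry the faces $S_j$) and the rounded collars; the collars have surface measure $O(\eps)$, and since $g(x;t_1,t_2)=\int_{t_1}^{t_2}f(u(x,t))\,dt$ is, by Lemma~\ref{lemweakbalance}, a bounded field with $\nabla_x\cdot g\in L^\infty_{loc}$, its normal traces on Lipschitz hypersurfaces are $L^\infty$ functions with norm at most $(t_2-t_1)\sup_Q|f(u)|$; hence the collar contributions vanish in the limit, while the flat-portion contributions converge to the $F^j(x_j;t_1,t_2)$ and $F^j(a_j;t_1,t_2)$. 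This produces both the existence of the $F^j$ and the balance equation, and the same argument applies verbatim to any box.

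For (ii), I would fix $j$ and all $a_i,x_i$ with $i\neq j$, set $R=\prod_{i\neq j}[a_i,x_i]\subseteq\RR^{n-1}$ (so that the relevant faces are the translates $\{s\}\times R$), and, for $s<s'$ close to $x_j$, apply the box balance equation from part (i) to the slab $B=[s,s']\times R$. Its boundary consists of the two faces $\{s\}\times R$ and $\{s'\}\times R$, with outward normals $-e_j$ and $e_j$, together with the $2(n-1)$ lateral faces $[s,s']\times T$, where $T$ ranges over the $(n-2)$-faces of $R$. Reading off the balance equation for $B$ gives
\be
F^j(s';t_1,t_2)-F^j(s;t_1,t_2)=-\int_{B}\big[u(x,t_2)-u(x,t_1)\big]\,dx-\Phi(s,s'),
\ee
where $\Phi(s,s')$ is the total flux through the lateral faces.

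Then I would estimate the right-hand side. Since $|B|=|s'-s|\,\mathcal{H}^{n-1}(R)$, the first term is bounded in absolute value by $2C_Q\,|s'-s|\,\mathcal{H}^{n-1}(R)$ with $C_Q=\sup\{|u(x,t)|:x\in\Omega,\ t\in[t_1,t_2]\}$. Each lateral face $[s,s']\times T$ has $(n-1)$-dimensional measure $|s'-s|\,\mathcal{H}^{n-2}(T)$, and by the same normal-trace bound as above the flux through it is at most $(t_2-t_1)\big(\sup_Q|f(u)|\big)\,|s'-s|\,\mathcal{H}^{n-2}(T)$. Summing these contributions yields $|F^j(s';t_1,t_2)-F^j(s;t_1,t_2)|\le L\,|s'-s|$ with $L$ depending only on $Q$, on $[t_1,t_2]$ and on the fixed side-lengths of $R$; this is the asserted local Lipschitz continuity. (Choosing $s=a_j$ and $s'=x_j$ one recovers precisely the $j$-th term of the balance equation, so everything is consistent.)

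I expect the main obstacle to be the control of the flux through a \emph{single} hypersurface --- specifically through the thin lateral faces of $B$, whose area tends to $0$ with $|s'-s|$, and through the rounded collars. Such a flux is not an integral of pointwise values of $f(u)$ (which live on a Lebesgue-null set) but must be understood as a normal trace, and the estimate ``flux $\leq$ (sup of $|f(u)|$)$\,\times\,$(surface measure)$\,\times\,$(time length)'' is exactly the point at which the divergence-measure structure furnished by Lemma~\ref{lemweakbalance} is indispensable. A secondary, more routine matter is to check that the boundary fluxes arising from the approximants $\Omega^\eps$ and from the slabs $B$ coincide with the $F^j$ of the statement and with the flux of Theorem~\ref{thmweakbalance}; this is a Fubini-type compatibility check, whose only delicate aspect is the commutation of the time integration with the surface trace, which is legitimate because $g$ is itself defined as a time integral of the bounded field $f(u)$.
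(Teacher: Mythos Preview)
The paper does not supply a proof of this corollary; it is stated immediately after the sentence ``It is easy to see how to generalize the theorem to bounded domains with piecewise-smooth boundaries'' and left to the reader. Your plan is correct and would carry through.

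That said, your route differs in flavour from the paper's own methods. In Lemma~\ref{lemtrace} the paper obtains Lipschitz continuity of the trace $h(y)$ by a pure duality argument: test functions constant on the leaves $\Gamma_y$ reduce matters to a one-dimensional weak-derivative estimate, and no normal-trace theory for divergence-measure fields is ever invoked. You, by contrast, lean on the Chen--Frid $L^\infty$ normal-trace bound both to kill the collar contribution in~(i) and to control the lateral flux $\Phi(s,s')$ in~(ii). This is legitimate---the paper references that theory---but it imports machinery the paper itself avoids. An argument closer to the paper's spirit would, for part~(ii), take test functions $\psi(x)=\theta(x_j)\eta(x')$ in~\eqref{eqintuf} with $\eta\in C^\infty_0(\RR^{n-1})$ approximating $\mathbf{1}_R$; the $\theta'(x_j)\eta(x')$ term produces $\int F^j_\eta(x_j)\theta'(x_j)\,dx_j$ with $F^j_\eta(x_j)=\int g(x_j,x')\cdot e_j\,\eta(x')\,dx'$, while the left-hand side and the $\theta(x_j)\nabla_{x'}\eta$ term are both bounded by $C\|\theta\|_{L^1}$ (the latter because $\|\nabla_{x'}\eta\|_{L^1}$ stays bounded by the perimeter of $R$ as $\eta\to\mathbf{1}_R$). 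This yields the Lipschitz bound for $F^j_\eta$ uniformly in $\eta$, and the limit $\eta\to\mathbf{1}_R$ gives the claim. The two arguments are really the same estimate in different clothing: your ``lateral surface area $\times\sup|f(u)|$'' bound is exactly the limit of $\|g\|_\infty\|\nabla_{x'}\eta\|_{L^1}$. Either route works; yours is more geometric and cites the trace theory, the paper's more functional-analytic and self-contained.
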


\end{document}